\newtheorem{theorem}{Theorem}[section]
\newtheorem{proposition}[theorem]{Proposition}
\theoremstyle{remark}
\newtheorem{remark}[theorem]{Remark}
\theoremstyle{definition}
\numberwithin{equation}{section}
\numberwithin{theorem}{section}
\newcommand{\mc}[1]{{\mathcal #1}}
\newcommand{\bb}[1]{{\mathbb #1}}
\newcommand{\rme}{\mathrm{e}}
\newcommand{\rmi}{\mathrm{i}}
\newcommand{\Lameps}{\Lambda_\varepsilon}
\newcommand{\bx}{\mbox{\boldmath{$x$}}}
\newcommand{\by}{\mbox{\boldmath{$y$}}}
\newcommand{\bzero}{\mbox{\bf 0}}
\begin{document}
\title{Computing Unstructured and Structured Polynomial Pseudospectrum Approximations}
\runningheads{S. Noschese and L. Reichel}{Unstructured and Structured Polynomial 
Pseudospectra}

\author{Silvia Noschese\affil{1}\corrauth and Lothar Reichel\affil{2}}

\address{\affilnum{1}Dipartimento di Matematica, SAPIENZA Universit\`a di Roma,
P.le Aldo Moro 5, 00185 Roma, Italy.\break
\affilnum{2}Department of Mathematical Sciences, Kent State University,
Kent, OH 44242, USA.}

\corraddr{Dipartimento di Matematica, SAPIENZA Universit\`a di Roma,
P.le Aldo Moro 5, 00185 Roma, Italy. E-mail: noschese@mat.uniroma1.it. Work  partially supported by INdAM-GNCS.}


\keywords{matrix polynomials, pseudospectrum, structured pseudospectrum, eigenvalue 
sensitivity, distance from defectivity, numerical methods}

\begin{abstract}
In many applications it is important to understand the sensitivity of eigenvalues of a 
matrix polynomial to perturbations of the polynomial. The sensitivity commonly is 
described by condition numbers or pseudospectra. However, the computation of pseudospectra
of matrix polynomials is very demanding computationally. This paper describes a new 
approach to computing approximations of pseudospectra of matrix polynomials by using 
rank-one or projected rank-one perturbations. These perturbations are inspired by 
Wilkinson's analysis of eigenvalue sensitivity. This approach allows the approximation of
both structured and unstructured pseudospectra. Computed examples show the method to 
perform much better than a method based on random rank-one perturbations both for the
approximation of structured and unstructured (i.e., standard) polynomial pseudospectra.
\end{abstract}

\maketitle

\section{Introduction}\label{sec1}
In many problems in science and engineering it is important to know the sensitivity of the
eigenvalues of a square matrix to perturbations. The pseudospectrum is an important aid 
for shedding light on the sensitivity. Many properties and applications of the 
pseudospectrum of a matrix are discussed by Trefethen and Embree \cite{TE}; see also
\cite{BGN,BGMN,G06,K10,R06}. However, the computation of pseudospectra is a 
computationally demanding task except for very small matrices. Therefore, the development 
of numerical methods for the efficient computation of pseudospectra of medium-sized 
matrices, or partial pseudospectra of large matrices, has received considerable attention; 
see \cite{BG,MP,NR,PRRWY,Wr02,WT}.

The present paper is concerned with the computation of pseudospectra of matrix polynomials
of the form
\begin{equation}\label{matpol}
P(\lambda)=A_m\lambda^m+A_{m-1}\lambda^{m-1}+\ldots+A_1\lambda+A_0,
\end{equation}
where $\lambda\in\mathbb{C}$ and $A_j\in\mathbb{C}^{n\times n}$, $j=0,\ldots,m$. We will
assume that $\det(A_m)\neq 0$. Then $P$ has $mn$ {\it finite} eigenvalues, i.e., there are 
no eigenvalues at infinity. Matrix polynomials of this kind arise in many applications in 
systems and control theory; see, e.g., \cite{GLR,GL,MMMV}. The case $m=1$ corresponds to 
the generalized eigenvalue problem
\[
A_0\bx = -\lambda A_1\bx,
\]
and the special case $A_1=-I_n$ yields a standard eigenvalue problem. Here and throughout 
this paper $I_n$ denotes the identity matrix of order $n$. In some applications the 
matrices $A_j$ in \eqref{matpol} have a structure that should be respected, such as being 
symmetric, skew-symmetric, banded, Toeplitz, or Hankel.

The sensitivity of the eigenvalues of a matrix polynomial \eqref{matpol} to perturbations 
in the matrices $A_j$ is important in applications. This question therefore has received 
considerable attention; see, e.g., \cite{BLP,G06,HT,Ti,TH} and references therein. 
When the matrices $A_j$ are structured, it is natural to only consider perturbations that 
are similarly structured. 

Define the spectrum of $P$,
\[
\Lambda(P)=\left\{ \lambda \in\mathbb{C}: \det (P(\lambda))=0 \right\}.
\]
Given a set of matrices $\Delta=\{\Delta_0,\ldots,\Delta_m\}$, 
$\Delta_j\in\mathbb{C}^{n\times n}$, and a set of weights 
$\omega=\{ \omega_0,\ldots, \omega_m\}$, $\omega_j\geq 0$ for all $j$, we let the 
class of admissible perturbed matrix polynomials be
\begin{equation}\label{calA}
{\cal A}(P,\varepsilon,\Delta,\omega)=\left\{ 
\sum_{j=0}^{m}(A_j+\varepsilon\Delta_j)\lambda^j:\| \Delta_j\|_F\leq \omega_j,
\;j=0,\ldots,m \right\}. 
\end{equation}
The parameters $\omega_j\geq 0$, $j=0,\dots,m$, determine the maximum norm of the 
perturbation $\Delta_j$ of each matrix $A_j$, where $\|\cdot\|_F$ denotes the Frobenius
norm. For instance, to keep $A_j$ unperturbed, we set $\omega_j=0$. 

One approach to investigate the sensitivity of the spectrum of a matrix polynomial to
admissible perturbations is to compute and plot the $\varepsilon$-pseudospectrum of $P$
for several $\varepsilon$-values, where the $\varepsilon$-pseudospectrum of $P(\lambda)$ 
for $\varepsilon>0$ is defined by
\begin{equation}\label{pseudospect}
\Lambda_{\varepsilon}(P)=\left\{ z\in \Lambda(Q): 
Q\in{\cal A}(P,\varepsilon,\Delta,\omega) \right\}. 
\end{equation}

The computation of a $\varepsilon$-pseudospectrum of a matrix polynomial generally is very
computationally intensive, in fact, it is much more demanding than the computation of the 
$\varepsilon$-pseudospectrum of a single matrix; see Tisseur and Higham \cite{TH} for a 
discussion on several numerical methods including approaches based on using a transfer 
function, random perturbations, and projections to small-scale problems. The computations 
use the companion form of the matrix polynomial $P$. This requires working with matrices 
of order $mn$, whose generalized Schur factorization is computed. Therefore, the 
computational methods can be expensive to apply when $mn$ is fairly large and an 
approximation of the $\varepsilon$-pseudospectrum is determined on a mesh with many 
points. Details and counts of arithmetic floating point operations are provided in 
\cite{TH}.

This paper describes a novel approach to approximate the $\varepsilon$-pseudospectra of 
$P$ by choosing particular rank-one perturbations of the matrices $A_j$ (or projected 
rank-one perturbations in case $A_j$ has a structure that is to be respected). The use of
these rank-one perturbations yields approximations of the $\varepsilon$-pseudospectrum 
\eqref{pseudospect} for a lower computational cost than the computation of the 
$\varepsilon$-pseudospectrum. Our approach is inspired by Wilkinson's analysis of 
eigenvalue perturbation of a single matrix; see \cite{W65}. It generalizes an approach 
recently developed in \cite{NR} for the efficient computation of structured or 
unstructured pseudospectra of a single matrix.

This paper is organized as follows. Section \ref{sec2} reviews results on the sensitivity
of a simple eigenvalue of a matrix polynomial, pseudospectra and the distance from 
defectivity for matrix polynomials is considered in Section \ref{sec3}, while the
corresponding discussions for structured perturbations can be found in Sections \ref{sec4}
and \ref{sec5}. Algorithms for computing approximate structured and unstructured 
pseudospectra for matrix polynomials are described in Section \ref{sec6}, and a few
computed examples are presented in Section \ref{sec7}. Finally, Section \ref{sec8} 
contains concluding remarks.

\section{The condition number of a simple eigenvalue of a matrix polynomial}\label{sec2}
Consider the matrix polynomial \eqref{matpol} and assume that the determinant of the 
leading coefficient matrix, $A_m$, is nonvanishing. Let $\lambda_0\in\mathbb{C}$ be an 
eigenvalue of $P$. Then the linear system of equations $P(\lambda_0)\bx=\bzero$ has a 
nonzero solution $\bx_0\in\mathbb{C}^n$ (a right eigenvector), and there is a nonzero 
vector $\by_0\in\mathbb{C}^n$ such that $\by_0^HP(\lambda_0)=\bzero^H$ (left eigenvector).
Here the superscript $^H$ denotes transposition and complex conjugation. The algebraic 
multiplicity of $\lambda$ is its multiplicity as a zero of the scalar polynomial 
$\det( P(\lambda))$. The algebraic multiplicity is known to be larger than or equal to 
the geometric multiplicity of $\lambda_0$, which is the dimension of the null space of 
$P(\lambda_0)$. The following result by Tisseur \cite[Theorem 5]{Ti} is important for the 
development of our numerical method. We therefore present a proof for completeness. 

\begin{proposition}\label{prop1}
Let $\lambda\in\Lambda(P)$ be a simple eigenvalue, i.e. $\lambda\notin\Lambda(P')$, with 
corresponding right and left eigenvectors $\bx$ and $\by$ of unit Euclidean norm. Here
$P'$ denotes the derivative of $\lambda\rightarrow P(\lambda)$. Then the condition number 
of $\lambda$ is given by 
\begin{equation}\label{uncond}
\kappa(\lambda)=\frac{\omega(|\lambda|)}{|\by^HP'(\lambda)\bx|},
\end{equation}
where $\omega(z)=\omega_mz^m+\ldots+\omega_0$. The maximal perturbations are 
\[
\Delta_{j}= \eta \omega_j\rme^{-\rmi j \arg(\lambda)}\by\bx^H,\qquad j=0,\dots,m\,,
\]
for any unimodular $\eta\in\mathbb C$.
\end{proposition}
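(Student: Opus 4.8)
The plan is to use first-order eigenvalue perturbation theory. Fix an admissible perturbed polynomial $Q\in{\cal A}(P,\varepsilon,\Delta,\omega)$ and write it as $Q(\mu)=P(\mu)+\varepsilon\,\Delta P(\mu)$, where $\Delta P(\mu)=\sum_{j=0}^m\Delta_j\mu^j$ and $\|\Delta_j\|_F\le\omega_j$. Since $\lambda$ is a simple eigenvalue, I would first invoke the implicit function theorem to guarantee that, for $\varepsilon$ small, $Q$ has an eigenvalue $\lambda(\varepsilon)$ and a right eigenvector $\bx(\varepsilon)$ depending analytically on $\varepsilon$, with $\lambda(0)=\lambda$ and $\bx(0)=\bx$. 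The simplicity hypothesis $\lambda\notin\Lambda(P')$ is precisely what makes this step and the nonvanishing of the denominator in \eqref{uncond} available.

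Next I would differentiate the defining relation $Q(\lambda(\varepsilon))\bx(\varepsilon)=\bzero$ with respect to $\varepsilon$ and evaluate at $\varepsilon=0$. Using the product rule and discarding the terms carrying a factor $\varepsilon$, this yields $P'(\lambda)\lambda'(0)\bx+P(\lambda)\bx'(0)+\Delta P(\lambda)\bx=\bzero$. Left-multiplying by $\by^H$ and using $\by^HP(\lambda)=\bzero^H$ eliminates the unknown $\bx'(0)$ and gives the first-order sensitivity $\lambda'(0)=-\,\by^H\Delta P(\lambda)\bx/(\by^HP'(\lambda)\bx)$. Defining the condition number as the worst-case first-order magnification $\kappa(\lambda)=\lim_{\varepsilon\to0}\sup|\lambda(\varepsilon)-\lambda|/\varepsilon$ over all admissible $Q$, it remains to maximize $|\by^H\Delta P(\lambda)\bx|$ subject to $\|\Delta_j\|_F\le\omega_j$.

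For the maximization I would expand $\by^H\Delta P(\lambda)\bx=\sum_{j=0}^m\lambda^j\,\by^H\Delta_j\bx$ and note that $\by^H\Delta_j\bx=\langle\Delta_j,\by\bx^H\rangle_F$, so the Cauchy--Schwarz inequality in the Frobenius inner product together with $\|\by\bx^H\|_F=\|\by\|_2\|\bx\|_2=1$ gives $|\by^H\Delta_j\bx|\le\|\Delta_j\|_F\le\omega_j$. Writing $\lambda=|\lambda|\rme^{\rmi\arg(\lambda)}$ and applying the triangle inequality then bounds $|\by^H\Delta P(\lambda)\bx|$ by $\sum_{j=0}^m\omega_j|\lambda|^j=\omega(|\lambda|)$, establishing the inequality $\le$ in \eqref{uncond}. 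To see that the bound is attained, I would substitute the proposed $\Delta_j=\eta\,\omega_j\rme^{-\rmi j\arg(\lambda)}\by\bx^H$: each term becomes $\lambda^j\by^H\Delta_j\bx=\eta\,\omega_j|\lambda|^j$, the phases align, the sum equals $\eta\,\omega(|\lambda|)$ of modulus $\omega(|\lambda|)$, and $\|\Delta_j\|_F=\omega_j$, so these perturbations are admissible and maximal. This simultaneously proves formula \eqref{uncond} and identifies the maximal perturbations.

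The step I expect to be most delicate is the first one: rigorously justifying the analytic dependence of the eigenpair on $\varepsilon$ and the consequent validity of the first-order expansion, which rests essentially on the simplicity of $\lambda$. The normalization of $\bx(\varepsilon)$ must also be fixed (for instance $\bx(\varepsilon)^H\bx(\varepsilon)=1$) so that $\bx'(0)$ is well defined before it is eliminated. The maximization step, by contrast, is a tight but routine application of Cauchy--Schwarz and the triangle inequality once the rank-one direction $\by\bx^H$ is recognized as optimal.
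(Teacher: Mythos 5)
Your argument is essentially the paper's own proof: differentiate the perturbed eigenvalue equation at $\varepsilon=0$, left-multiply by $\by^H$ to eliminate $\bx'(0)$ via $\by^HP(\lambda)=\bzero^H$, bound $|\by^H\Delta P(\lambda)\bx|$ by $\omega(|\lambda|)$, and verify attainment with the stated rank-one perturbations. You are somewhat more explicit than the paper on two points it leaves implicit --- the analytic dependence of the eigenpair on $\varepsilon$ and the Cauchy--Schwarz step $|\by^H\Delta_j\bx|=|\langle\Delta_j,\by\bx^H\rangle_F|\le\|\Delta_j\|_F$ --- but the route is the same.
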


\begin{proof}
Differentiating 
$\sum_{j=0}^{m}(A_j+\epsilon\Delta_j)\lambda^j(\varepsilon)\bx(\varepsilon)=0$ with 
respect to $\varepsilon\in\mathbb{C}$ yields
\[
\sum_{j=0}^{m}\Delta_j\lambda^j(\varepsilon)\bx(\varepsilon)+ 
\sum_{j=1}^{m}(A_j+\epsilon\Delta_j)j\lambda^{j-1}(\varepsilon)\lambda'(\varepsilon)
\bx(\varepsilon)+
\sum_{j=0}^{m}(A_j+\epsilon\Delta_j)\lambda^j(\varepsilon)\bx'(\varepsilon)=\bzero.
\]
Setting $\varepsilon=0$, one obtains
\[
\sum_{j=0}^{m}\Delta_j\lambda^j\bx+  
\sum_{j=1}^{m}A_jj\lambda^{j-1}\lambda'(0)\bx+\sum_{j=0}^{m}A_j\lambda^j\bx'(0)=\bzero,
\]
where $\lambda=\lambda(0)$. It follows that 
\[
P'(\lambda)\lambda'(0)\bx=-P(\lambda)\bx'(0)-\sum_{j=0}^{m}\Delta_j\lambda^j\bx.
\]
Applying $\by^H$ to both the right-hand side and left-hand side of this equality yields
\[
\by^HP'(\lambda)\bx\cdot\lambda'(0)=-\by^HP(\lambda)\bx'(0)-
\by^H\sum_{j=0}^{m}\Delta_j\lambda^j\bx,
\]
where we note that $\by^HP'(\lambda)\bx\ne 0$ because $\lambda$ is a simple eigenvalue; 
see \cite[Theorem 3.2]{ACL}. Observing that $\by^HP(\lambda)=\bzero^H$, and dividing by 
$\by^HP'(\lambda)\bx$, one has
\[
\lambda'(0)=-\frac{\by^H\sum_{j=0}^{m}\Delta_j\lambda^j\bx}{\by^HP'(\lambda)\bx}.
\]
Taking absolute values yields
\[
|\lambda'(0)|=\frac{|\by^H(\sum_{j=0}^{m}\Delta_j\lambda^j)\bx|}
{|\by^HP'(\lambda)\bx|}\leq\frac{\omega(|\lambda|)}{|\by^HP'(\lambda)\bx|},
\]
where the inequality follows from the bounds $\|\Delta_j\|_F\leq \omega_j$,  
$j=0,\ldots,m$. Finally, letting the matrix $\Delta_j$ be a rank-one matrix of the form
$\eta\omega_j\rme^{-\rmi j \arg(\lambda)}\by\bx^H$ with unimodular $\eta\in\mathbb{C}$ 
(and therefore of Frobenius norm $\omega_j$) for all $j=0,\ldots,m$ shows the proposition. 
\end{proof}

\begin{remark}
Consider the standard eigenvalue problem with $m=1$, $A_0=A$, and $A_1=-I_n$. Then 
$P(\lambda)=A-\lambda I_n$ and $P'(\lambda)=-I_n$. Setting $\omega_0=1$ and $\omega_1=0$,
Proposition \ref{prop1} yields the standard eigenvalue condition number 
$\kappa(\lambda)=1/|\by^H\bx|$. When instead $A_0=A$ and $A_1=-B$, we obtain 
$P(\lambda)=A-\lambda B$ (and $P'(\lambda)=-B$), and the proposition gives the generalized
eigenvalue condition number $\kappa(\lambda)=(\omega_0+\omega_1|\lambda|)/|\by^HB\bx|$; see
\cite{HH}.
\end{remark}

\begin{remark}
If $n=1$, the polynomial is scalar-valued. Let $\lambda$ be a simple root of $P$. Then the
condition number of $\lambda$ is $\omega(|\lambda|)/|P'(\lambda)|$.
\end{remark}

\section{The $\varepsilon$-pseudospectrum of a matrix polynomial and the distance from 
defectivity}\label{sec3}
The $\varepsilon$-pseudospectrum of $P(\lambda)$ given by \eqref{pseudospect} is bounded 
if and only if $\det(A_m+\varepsilon\Delta_m)\neq 0$ for all $\Delta_m$ such that
$\|\Delta_m\|_F\leq \omega_m$. Therefore the boundedness of $\Lambda_{\varepsilon}(P)$ is
guaranteed if $\varepsilon$ is such that the origin does not belong to the 
$\omega_m\varepsilon$-pseudospectrum of $A_m\in\mathbb{C}^{n\times n}$, which is given by
\[
\Lambda_{\omega_m\varepsilon}(A_m):=\left\{z \in\Lambda(A_m+E),~~
E\in\mathbb{C}^{n\times n},~~\|E\|_F\le\omega_m\varepsilon\right\}.
\]
It is easy to see that, if $\varepsilon$ satisfies the constraint
\begin{equation*}
\varepsilon< \min_{1\le i\le n}
\frac{|\lambda_i(A_m)|}{\widehat{\kappa}(\lambda_i(A_m))\omega_m},
\end{equation*}
then  a first order analysis suggests that no component of 
$\Lambda_{\omega_m\varepsilon}(A_m)$, which is approximately a disk of radius 
$\widehat{\kappa}(\lambda_i(A_m))\omega_m\varepsilon$ centered at $\lambda_i(A_m)$ for 
$\omega_m\varepsilon$ small enough, can contain the origin. The origin is on the border of
the disk centered at $\lambda_i(A_m)$  when 
$|\lambda_i(A_m)|={\widehat{\kappa}(\lambda_i(A_m))\omega_m\varepsilon}$. Here 
$\widehat{\kappa}(\lambda(M))$ denotes the traditional condition number of the eigenvalue 
$\lambda$ of the matrix $M\in\mathbb{C}^{n\times n}$. 

Since by assumption $\det (A_m) \neq0$, the 
$\varepsilon$-pseudospectrum \eqref{pseudospect} has at most $mn$ bounded connected 
components. Any small connected component of the $\varepsilon$-pseudospectrum that 
contains exactly one simple eigenvalue $\lambda_0$ of the matrix polynomial $P$ is 
approximately a disk centered at $\lambda_0$ with radius $\kappa(\lambda_0)\varepsilon$.
A matrix polynomial $Q(\lambda)$ is said to be defective if it has an eigenvalue 
$\hat{\lambda}$, whose algebraic multiplicity is strictly larger than its geometric
multiplicity; see \cite{AAB}. Disjoint components of $\Lambda_{\varepsilon}(P)$ associated
with distinct eigenvalues are, to a first order approximation, disjoint disks if 
$\varepsilon$ is strictly smaller than the distance $\varepsilon_{*}$ from defectivity of 
the matrix polynomial $P(\lambda)$, where
\[
\varepsilon_{*}=\inf \{ \| P(\lambda) - Q(\lambda) \|_F \colon 
Q(\lambda) \in \mathbb{C}^{n\times n} \ \mbox{is defective} \}.
\]

A rough estimate of $\varepsilon_{*}$ is 
given by
\begin{equation}\label{rout_1}
\varepsilon:=\min_{\substack{1\le i\le mn\\ 1\le j\le mn \\ j\neq i}}
\frac{|\lambda_i -\lambda_j|}{\kappa(\lambda_i)+\kappa(\lambda_j)}\,.
\end{equation}
The disk centered at $\lambda_i$ is tangential to the disk centered at $\lambda_j$ when 
$|\lambda_i -\lambda_j|=(\kappa(\lambda_i)+\kappa(\lambda_j))\,\varepsilon$. Let 
the index pair $\{\hat\imath,\hat\jmath\}$ minimize the ratio \eqref{rout_1} over all 
distinct eigenvalue pairs. 
We will refer
to the eigenvalues $\lambda_{\hat\imath}$ and $\lambda_{\hat\jmath}$ as the {\it most 
$\Lameps$-sensitive pair of eigenvalues}. We note that typically the most 
$\Lameps$-sensitive pair of eigenvalues are not the eigenvalues with the largest
condition numbers.

\section{The structured condition number of a simple eigenvalue of a matrix polynomial}
\label{sec4}
We briefly comment on structured eigenvalue condition numbers for a single matrix before 
turning to matrix polynomials. Consider the set 
${\mc S}\,{\scriptscriptstyle{\substack{\subset \\ \ne}}}\,\mathbb{C}^{n \times n}$ of
structured matrices. For instance, the set may consist of symmetric, tridiagonal, or
Toeplitz matrices. We are concerned with structured perturbations in ${\mc S}$. Let 
$M|_{\mc S}$ denote the matrix in $\mc S$ closest to $M\in\mathbb{C}^{n\times n}$ with 
respect to the Frobenius norm. This projection is used in definition of the eigenvalue 
condition number for structured perturbations, see \cite{BN,NP06,NP07,NR}, where it is 
shown that the eigenvalue condition number for structured perturbations is smaller than 
the eigenvalue condition number for unstructured perturbations. We also will use the 
normalized projection
\[
M|_{\widehat{\mc S}} := \frac{M|_\mc S}{\|M|_\mc S\|_F}
\]
in the definition of maximal structured perturbations in Proposition \ref{prop2} below.

Matrix polynomials \eqref{matpol} are defined by $m+1$ matrices $A_j$, some or all of 
which may have a structure that is important for the application at hand. We refer to a
matrix polynomial with at least one structured matrix $A_j$ as a structured matrix
polynomial. To measure the sensitivity of the eigenvalues of a structured matrix 
polynomial to similarly structured perturbations, we proceed as follows. Let ${\mc S}_j$ 
be a set of structured matrices that the matrix $A_j$ of the matrix polynomial $P$ belongs 
to. If $A_j$ has no particular structure, then ${\mc S}_j={\mathbb C}^{n\times n}$. 
Introduce the set of sets of structured matrices 
${\mc S}=\left\{{\mc S}_0,{\mc S}_1,\dots {\mc S}_m \right\}$ and let 
the class of admissible perturbed matrix polynomials be 
\begin{equation*}
{\cal A}^{\mc S}(P,\varepsilon,\omega,\Delta)=
\left\{ \sum_{j=0}^{m}(A_j+\varepsilon\Delta_j)\lambda^j:\Delta_j\in {\cal S}_j,\;
\| \Delta_j\|_F\leq \omega_j,\;j=0,\ldots,m \right\}. 
\end{equation*}

\begin{proposition}\label{prop2}
Let $\lambda\in\Lambda(P)$ be a simple eigenvalue with corresponding right and left 
eigenvectors $\bx$ and $\by$ of unit Euclidean norm. Then the structured condition number 
of $\lambda$ is given by 
\begin{equation}\label{strcond}
\kappa^{\mc S}(\lambda)=\frac{\omega^{\mc S}(|\lambda|)}{|\by^HP'(\lambda)\bx|},
\end{equation}
where 
\[
\omega^{\mc S}(z)=\sum_{j=0}^m \|\by\bx^H|_{{\mc S}_j}\|_F\,\omega_j z^j \,.  
\]
The maximal perturbations are given by 
\[
\Delta^{\mc S}_{j}=\eta\omega_j\rme^{-\rmi j\arg(\lambda)}\by\bx^H|_{\widehat{\mc S}_j},
\qquad j=0,\dots,m,
\]
for any unimodular $\eta\in\bb C$.
\end{proposition}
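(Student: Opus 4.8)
The plan is to mirror the proof of Proposition \ref{prop1}, observing that its derivation of the first-order sensitivity never used any structure of the perturbations. Differentiating the structured perturbed eigenvalue equation and setting $\varepsilon=0$ reproduces verbatim the identity
\[
\lambda'(0)=-\frac{\by^H\left(\sum_{j=0}^m\Delta_j\lambda^j\right)\bx}{\by^HP'(\lambda)\bx},
\]
now subject to the side condition $\Delta_j\in{\mc S}_j$. Hence the structured condition number is obtained by maximizing the modulus of the numerator over all admissible structured $\Delta_j$, and the problem decouples into estimating each term $\by^H\Delta_j\bx$ separately.

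The key step is to pass to the Frobenius inner product $\langle A,B\rangle_F=\mathrm{trace}(B^HA)$. I would rewrite $\by^H\Delta_j\bx=\langle\Delta_j,\by\bx^H\rangle_F$ and then, since $\Delta_j$ lies in the subspace ${\mc S}_j$ while $\by\bx^H-\by\bx^H|_{{\mc S}_j}$ is orthogonal to ${\mc S}_j$, replace $\by\bx^H$ by its projection:
\[
\by^H\Delta_j\bx=\langle\Delta_j,\by\bx^H|_{{\mc S}_j}\rangle_F.
\]
Cauchy--Schwarz together with $\|\Delta_j\|_F\le\omega_j$ then gives $|\by^H\Delta_j\bx|\le\omega_j\,\|\by\bx^H|_{{\mc S}_j}\|_F$, and the triangle inequality applied to $\sum_j\lambda^j\by^H\Delta_j\bx$ yields the upper bound $\omega^{\mc S}(|\lambda|)$ for the numerator, hence $\kappa^{\mc S}(\lambda)\le\omega^{\mc S}(|\lambda|)/|\by^HP'(\lambda)\bx|$.

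It remains to show the bound is attained by the stated perturbations $\Delta^{\mc S}_j=\eta\omega_j\rme^{-\rmi j\arg(\lambda)}\by\bx^H|_{\widehat{\mc S}_j}$. Here I would check three things: that $\Delta^{\mc S}_j\in{\mc S}_j$ with $\|\Delta^{\mc S}_j\|_F=\omega_j$ (immediate from the normalization $\|\by\bx^H|_{\widehat{\mc S}_j}\|_F=1$ and $|\eta|=1$); that the projection identity $\langle\by\bx^H|_{{\mc S}_j},\by\bx^H\rangle_F=\|\by\bx^H|_{{\mc S}_j}\|_F^2$ holds, so that $\by^H\Delta^{\mc S}_j\bx=\eta\omega_j\rme^{-\rmi j\arg(\lambda)}\|\by\bx^H|_{{\mc S}_j}\|_F$; and finally that multiplying by $\lambda^j=|\lambda|^j\rme^{\rmi j\arg(\lambda)}$ cancels the phase factor, so that every summand shares the common argument of $\eta$.

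The main obstacle, exactly as in the unstructured case, is this phase alignment: the triangle inequality is an equality only when all the numbers $\lambda^j\by^H\Delta_j\bx$ point in the same direction, and the role of the factor $\rme^{-\rmi j\arg(\lambda)}$ is precisely to enforce this. Once the phases are shown to collapse, summing gives $|\sum_j\lambda^j\by^H\Delta^{\mc S}_j\bx|=\sum_j|\lambda|^j\omega_j\|\by\bx^H|_{{\mc S}_j}\|_F=\omega^{\mc S}(|\lambda|)$, matching the upper bound and completing the proof. A secondary point worth flagging is that the orthogonal-projection argument requires each ${\mc S}_j$ to be a linear subspace (as holds for the symmetric, Toeplitz, banded, etc.\ structures of interest), so that $\Delta^{\mc S}_j$ remains in ${\mc S}_j$ after scaling.
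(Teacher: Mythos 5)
Your proof is correct and follows essentially the same route as the paper's: differentiate the perturbed eigenvalue equation as in Proposition \ref{prop1}, bound each term $|\by^H\Delta_j\bx|$ by $\omega_j\,\|\by\bx^H|_{{\mc S}_j}\|_F$, and show the phase factors $\rme^{-\rmi j\arg(\lambda)}$ align all summands so the bound is attained. The only difference is that you make explicit the Frobenius inner-product, orthogonal-projection, and Cauchy--Schwarz justification of the per-term bound (and the linear-subspace hypothesis on ${\mc S}_j$), which the paper asserts without derivation, relying on the cited literature.
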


\begin{proof}
Differentiating 
$\sum_{j=0}^{m}(A_j+\epsilon\Delta_j)\lambda^j(\varepsilon)\bx(\varepsilon)=\bzero$ with 
respect to $\varepsilon$, as in the proof of Proposition \ref{prop1}, one obtains
\[
|\lambda'(0)|=\frac{|\by^H(\sum_{j=0}^{m}\Delta_j\lambda^j)\bx|}{|\by^HP'(\lambda)\bx|}=
\frac{|\sum_{j=0}^{m}(\by^H\Delta_j\bx)\lambda^j|}{|\by^HP'(\lambda)\bx|},
\]
where $\Delta_j \in {\mc S}_j$ satisfies $\|\Delta_j\|_F\leq \omega_j$, $j=0,\ldots,m$. 
Substituting $\Delta_j$, for $j=0,\ldots,m$, by the structured matrix 
$\eta \omega_j\by\bx^H|_{\widehat{\mc S}_j}\in {\mc S}_j$ with Frobenius norm $\omega_j$,
the upper bound $\omega_j\,\|\by\bx^H|_{{\mc S}_j}\|_F$ for $|\by^H\Delta_j\bx|$ is 
attained. Finally, letting 
$\Delta_j=\eta\omega_j\rme^{-\rmi j\arg(\lambda)}\by\bx^H|_{\widehat{\mc S}_j}$ for all 
$j=0,\ldots,m$ gives
\[
|\lambda'(0)|=\frac{\omega^{\mc S}(|\lambda|)}{|\by^HP'(\lambda)\bx|}.
\]
This concludes the proof.
\end{proof}

\begin{remark}\label{masspr}
The structured condition number \eqref{strcond} is bounded above by the (unstructured)
condition number \eqref{uncond}. In fact, the former can be much smaller than the latter.
For instance, let us consider the quadratic eigenvalue problem $P(\lambda)\bx=\mathbf{0}$,
with $\bx \neq\mathbf{0}$, where 
\begin{equation*}
P(\lambda)=M\lambda^2+C\lambda+K,
\end{equation*}
with the same structured mass matrix $M$, damping matrix $C$ and stiffness matrix $K$ as 
in \cite[Section 4.2]{TH}, i.e., $M:=I_n$, $C:=10\,\mathrm{tridiag}(-1,3,-1)$, and 
$K:=5\,\mathrm{tridiag}(-1,3,-1)$. The $2n$ eigenvalues of the polynomial matrix are real
and negative. In more detail, the spectrum is split into two sets: $n$ eigenvalues are 
spread approximately uniformly in the interval $[-50,-10]$ and $n$ eigenvalues are 
clustered at $-0.5$. Figure \ref{Fig0} shows the unstructured (i.e., standard) condition
numbers (top graph) and structured condiition numbers (bottom graph) for each eigenvalue.
The unstructured condition numbers are seen to be much larger than the structured 
condition numbers.

\begin{figure}[ht]
\begin{center}
\includegraphics[width=7cm]{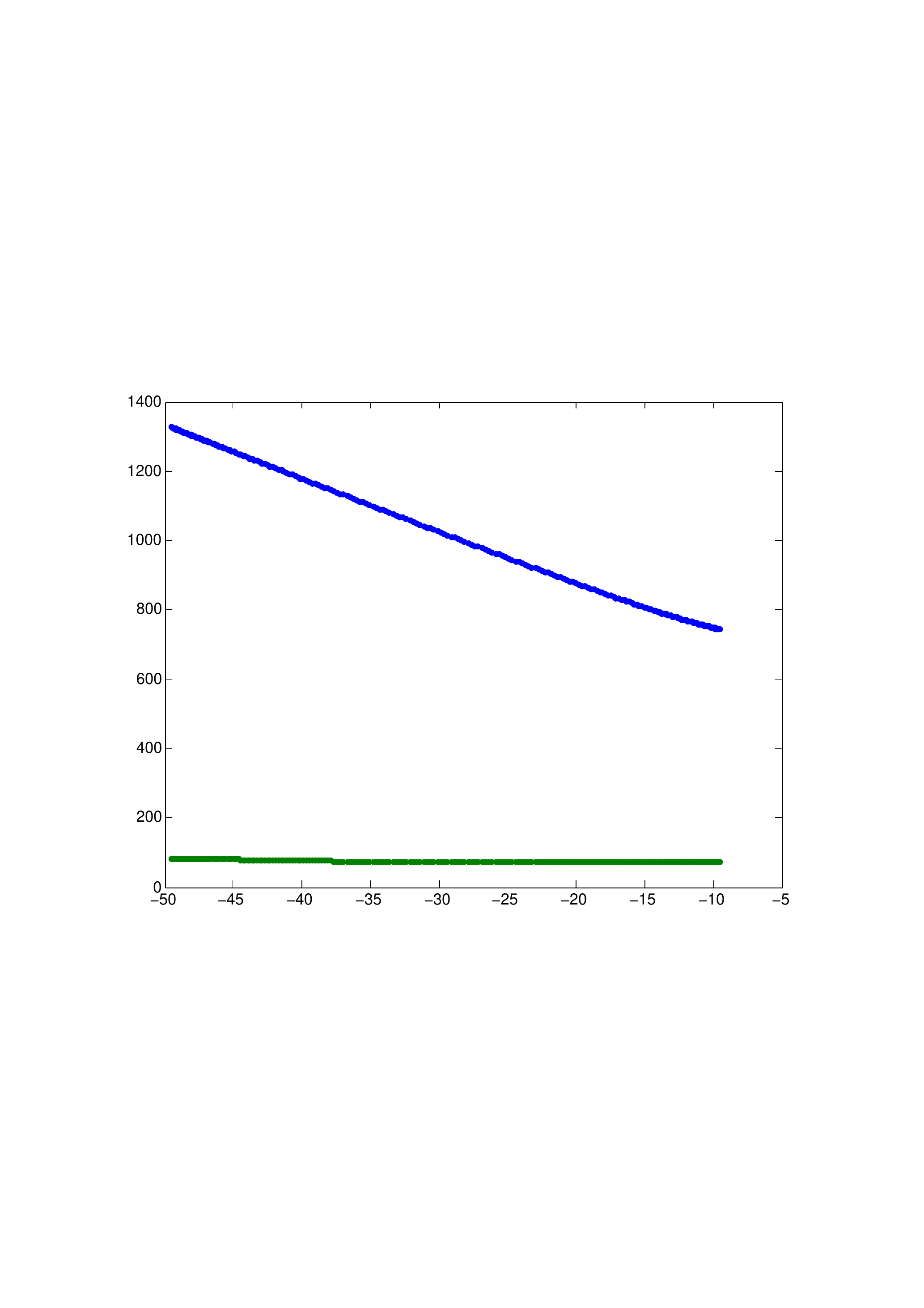} \hskip 0.25cm 
\includegraphics[width=7cm]{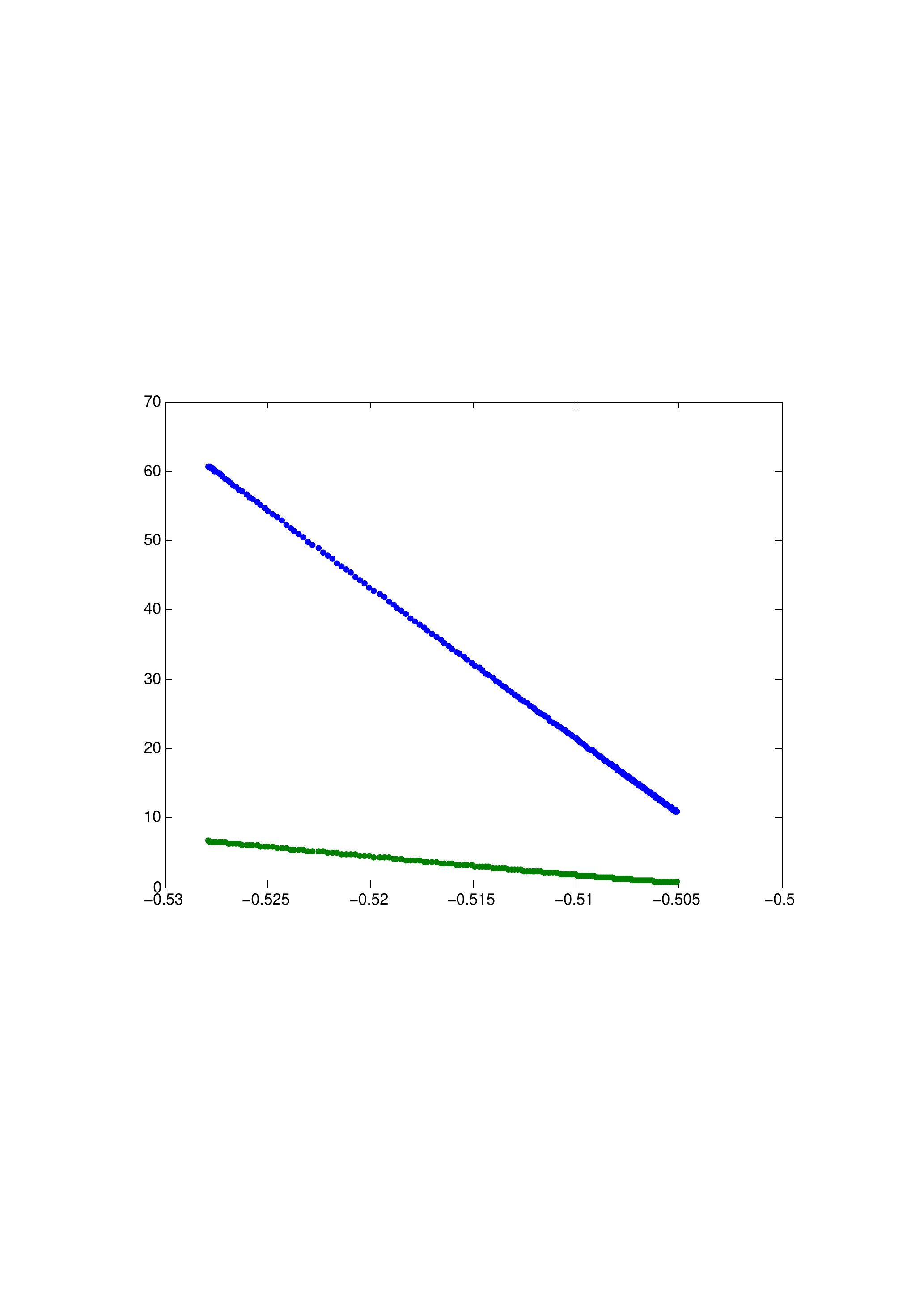}
\caption{Unstructured and structured condition numbers of the eigenvalues of a damped 
mass-spring system with $250$ degrees of freedom considered in \cite[Section 4.2]{TH}.
Left picture: the top graph shows the unstructured condition number and the bottom 
graph the structured condition number versus the eigenvalue values for the leftmost 
subset of eigenvalues. Right picture: Similar as the left picture for the rightmost
subset of eigenvalues. \hfill\break}\label{Fig0}
\end{center}
\end{figure}
\end{remark}

\section{The structured $\varepsilon$-pseudospectrum of a matrix polynomial and the 
structured distance from defectivity}\label{sec5}
The ${\mc S}$-structured $\varepsilon$-pseudospectrum of $P(\lambda)$ is for 
$\varepsilon>0$ defined by
\begin{equation}\label{strpseudospect}
\Lambda_{\varepsilon}^{\mc S}(P)=\left\{ z\in \Lambda(Q): 
Q\in{\cal A}^{\mc S}(P,\varepsilon,\omega,\Delta) \right\}. 
\end{equation}
One has that $\Lambda_{\varepsilon}^{\mc S}(P)$ is bounded if and only if 
$\det (A_m+\varepsilon\Delta_m) \neq0$ for all $\Delta_m \in {\mc S}_j$ such that
$\|\Delta_m\|_F\leq \omega_m$. Thus, the boundedness of $\Lambda_{\varepsilon}^{\mc S}(P)$
is guaranteed if $\varepsilon$ is such that 
$0\notin \Lambda_{\omega_m\varepsilon}^{\mc S_m}(A_m) $, where 
$\Lambda_{\omega_m\varepsilon}^{\mc S_m}(A_m)$ denotes the structured 
$\omega_m\varepsilon$-pseudospectrum of $A_m\in{\mc S_m}$, which is defined by 
\begin{equation*}
\Lambda_{\omega_m\varepsilon}^{\mc S_m}(A_m):=\left\{z \in\Lambda(A_m+E),~~
E\in{\mc S_m},~~\|E\|_F\le\omega_m\varepsilon\right\}.
\end{equation*}
We will assume that $\varepsilon$ satisfies the constraint
\begin{equation*}
\varepsilon< \min_{1\le i\le n}
\frac{|\lambda_i(A_m)|}{\widehat{\kappa}_{\mc S_m}(\lambda_i(A_m))\omega_m}.
\end{equation*}
Then a first order analysis suggests that no component of 
$\Lambda_{\omega_m\varepsilon}^{\mc S_m}(A_m)$ contains the origin. In fact, when 
$\epsilon>0$ is small, the component that contains the eigenvalue $\lambda_i(A_m)$ of 
$A_m$ is approximately a disk of radius 
$\widehat{\kappa}_{\mc S_m}(\lambda_i(A_m))\omega_m\varepsilon$ centered at 
$\lambda_i(A_m)$. Here $\widehat{\kappa}_{\mc S_m}(\lambda)$ denotes the 
${\mc S_m}$-structured condition number of the eigenvalue $\lambda$ in $\Lambda(M)$, where 
$M$ belongs to  the set ${\mc S_m}$ of structured matrices in $\mathbb{C}^{n\times n}$. 

Any small connected component of $\Lambda_{\varepsilon}^{\mc S}(P)$ that contains exactly 
one simple eigenvalue $\lambda_0\in \Lambda(P)$ is approximately a disk centered at
$\lambda_0$ with radius $\kappa^{\mc S}(\lambda_0)\varepsilon$. Such disks of
$\Lambda_{\varepsilon}^{\mc S}(P)$ for distinct eigenvalues are, to a first order 
approximation, disjoint if $\varepsilon$ is strictly smaller than the structured 
distance $\varepsilon^{\mc S}_{*}$ from defectivity of the matrix polynomial $P(\lambda)$.
This distance is given by
\[
\varepsilon^{\mc S}_{*} = \inf \{ \| P(\lambda) - Q(\lambda) \|_F \colon Q(\lambda) \in 
{\mc S} \ \mbox{is defective} \}\;.
\]

A rough estimate  of $\varepsilon_{*}^{{\mc S}}$ is provided by
\begin{equation}\label{rout_2}
\varepsilon^{\mc S}:= \min_{\substack{1\le i\le mn\\ 1\le j\le mn \\ j\neq i}}
\frac{|\lambda_i -\lambda_j|}{\kappa^{\mc S}(\lambda_i)+\kappa^{\mc S}(\lambda_j)}\geq
\varepsilon \,.
\end{equation}

Similarly as in Section \ref{sec3}, the disk centered at $\lambda_i$ is tangential to the
disk centered at $\lambda_j$ when 
$|\lambda_i-\lambda_j|=(\kappa^{\mc S}(\lambda_i)+\kappa^{\mc S}(\lambda_j))\,\varepsilon$. 
Let the index pair $\{\hat\imath,\hat\jmath\}$ minimize the ratio \eqref{rout_2} over all 
distinct eigenvalue pairs. We will refer to the eigenvalues $\lambda_{\hat\imath}$ and 
$\lambda_{\hat\jmath}$ as the {\it most $\Lameps^{\mc S}$-sensitive pair of eigenvalues}. 
We note that usually the most $\Lameps^{\mc S}$-sensitive pair of eigenvalues is not made 
up of the worst conditioned eigenvalues with respect to structured perturbations.

\section{Algorithms}\label{sec6}
This section describes algorithms based on Propositions \ref{prop1} and \ref{prop2}
for computing approximations of unstructured and structured pseudospectra of matrix
polynomials. 

Let $\{\lambda_i,\bx_i,\by_i\}_{i=1}^{mn}$ denote eigen-triplets made up of the 
eigenvalues $\lambda_i$ and associated left and right unit eigenvectors, $\bx_i$ and
$\by_i$, respectively, of the matrix polynomial $P$ defined by \eqref{matpol}. We
will assume the eigenvalues to be distinct. If a matrix polynomial has multiple
eigenvalues, then we can apply the algorithms to the ones of algebraic multiplicity
one. Throughout this section $\rmi=\sqrt{-1}$.

Algorithm \ref{algo1} describes our numerical method for the approximation of the
$\varepsilon$-pseudospectrum of a matrix polynomial $P$ defined by matrices $A_j$, 
$j=0,\ldots,m$, without particular structure. The algorithm first determines an estimate 
$\varepsilon$ of the distance to defectivity \eqref{rout_1} of the matrix polynomial and 
the indices $\hat\imath$ and $\hat\jmath$ of the most $\Lambda_\varepsilon$-sensitive pair
of eigenvalues of $P$. It then computes the rank-one matrices $\Delta_{\hat\imath}$ and 
$\Delta_{\hat\jmath}$ defined in Proposition \ref{prop1} for all (simple) eigenvalues 
$\lambda$ of $P$ and for equidistant values on the unit circle in the complex plane. This 
defines the perturbations of the polynomial $P$ at the eigenvalues $\lambda$. The spectra 
of the perturbations of $P$ so obtained are displayed. This simple approach typically 
provides valuable insight into properties of the $\varepsilon$-pseudospectrum of $P$.

\begin{algorithm}
\DontPrintSemicolon
\KwData{matrix polynomial $P$, eigensystem $\{\lambda_i,\bx_i,\by_i,\; \forall i=1:mn\}$,
weights $\{\omega_h, \; \forall h=0:m\}$}
\KwResult{$\Lambda_{\varepsilon}(P)$ approximated by $2N$ simulations}

\nl compute $\varepsilon$, $\{\hat\imath,\hat\jmath\}$  by \eqref{rout_1}\;
\nl compute  $W_{\hat\imath}(\lambda)=\sum_{h=0}^m \omega_h\rme^{-\rmi h \arg(\lambda_{\hat\imath})}\lambda^h\by_{\hat\imath}\bx_{\hat\imath}^H$\; 

\nl compute $W_{\hat\jmath}(\lambda)=\sum_{h=0}^m \omega_h\rme^{-\rmi h \arg(\lambda_{\hat\jmath})}\lambda^h\by_{\hat\jmath}\bx_{\hat\jmath}^H$  \;
\nl display the spectrum of  $P(\lambda)+\varepsilon \rme^{\rmi\theta_k} W_{\hat\imath}(\lambda)$ for  $\theta_k=2\pi(k-1)/N$, $k=1:N$ \;
\nl display the spectrum of  $P(\lambda)+\varepsilon \rme^{\rmi\theta_k} W_{\hat\jmath}(\lambda)$ for $\theta_k=2\pi(k-1)/N$, $k=1:N$ \;
\caption{Algorithm for computing an approximated pseudospectrum} \label{algo1}
\end{algorithm}

Algorithm \ref{algo2} is an analogue of Algorithm \ref{algo1} for the approximation of the
structured $\varepsilon$-pseudospectrum of a matrix polynomial. The algorithm differs from 
Algorithm \ref{algo1} in that the distance to defectivity in the latter algorithm is 
replaced by the structured distance to defectivity \eqref{rout_2} and the rank-one 
perturbations are replaced by structured rank-one perturbations defined in Proposition 
\ref{prop2}.

\begin{algorithm}
\DontPrintSemicolon
\KwData{matrix polynomial $P$, eigensystem $\{\lambda_i,\bx_i,\by_i,\; \forall i=1:mn\}$,
weights $\{\omega_h, \; \forall h=0:m\}$}
\KwResult{$\Lambda_{\varepsilon^{\mc S}}^{\mc S}(P)$ approximated by $2N$ simulations}

\nl compute $\varepsilon^{\mc S}$, $\{\hat\imath,\hat\jmath\}$   by \eqref{rout_2}\;

\nl compute  $W^{\mc S}_{\hat\imath}(\lambda)=\sum_{h=0}^m \omega_h\rme^{-\rmi h \arg(\lambda_{\hat\imath})}\lambda^h\by_{\hat\imath}\bx_{\hat\imath}^H|_{\widehat{\mc S}_h}$ \;

\nl compute $W^{\mc S}_{\hat\jmath}(\lambda)=\sum_{h=0}^m \omega_h\rme^{-\rmi h \arg(\lambda_{\hat\jmath})}\lambda^h\by_{\hat\jmath}\bx_{\hat\jmath}^H|_{\widehat{\mc S}_h}$  \;

\nl display the spectrum of  $P(\lambda)+\varepsilon^{\mc S}\rme^{\rmi\theta_k} W^{\mc S}_{\hat\imath}(\lambda)$ for $\theta_k=2\pi(k-1)/N$, $k=1:N$ \;

\nl display the spectrum of  $P(\lambda)+\varepsilon^{\mc S}\rme^{\rmi\theta_k}  W^{\mc S}_{\hat\jmath}(\lambda)$ for $\theta_k=2\pi(k-1)/N$, $k=1:N$ \;
\caption{Algorithm for computing an approximated structured pseudospectrum} \label{algo2}
\end{algorithm}

Both Algorithms \ref{algo1} and \ref{algo2} are easy to implement. The algorithms require 
the computation of the $mn$ eigenvalues of $n\times n$ polynomial matrices. Evaluating 
the spectrum of $2N$ perturbed polynomial matrices is the main computational burden and 
easily can be implemented efficiently on a parallel computer. However, a laptop computer 
was sufficient for the computed examples reported in the following section.

\section{Numerical examples}\label{sec7}
The computations were performed on a MacBook Air laptop computer with a 1.8Ghz CPU and 4 
Gbytes of RAM. All computations were carried out in MATLAB with about $16$ significant 
decimal digits. 

{\bf Example 1}.
Consider the matrix polynomial $P(\lambda)=A_2\lambda^2+A_1\lambda+A_0$, where $A_0$ and
$A_1$ are real $5\times 5$ matrices with normally distributed random entries with zero 
mean and variance, and $A_2$ is a real tridiagonal Toeplitz matrix of the same order with
similarly distributed random diagonal, superdiagonal, and subdiagonal entries. We choose 
the weights $\omega_i=\|A_i\|_F$, $i=0:2$. The 
eigenvalues of $P$ and their standard and structured condition numbers are shown in Table 
\ref{Tab1}. The structured condition numbers can be seen to be smaller than the standard
condition numbers.

The estimate (\ref{rout_1}) of the (unstructured) distance from defectivity 
$\varepsilon_*$ is $\varepsilon_1=0.0127$. It is achieved for the indices $5$ and $7$, as
well as for the indices $4$ and $6$, of the most $\Lameps$-sensitive pairs of eigenvalues. 
The left plot in Figure \ref{fig1} displays the spectrum of matrix polynomials of the form 
$P(\lambda)+\varepsilon_1 \rme^{\rmi\theta_k} W_5(\lambda)$ and 
$P(\lambda)+\varepsilon_1 \rme^{\rmi\theta_k} W_7(\lambda)$ for $\theta_k=2\pi(k-1)/N$, 
$k=1:N$, and $N=5\cdot10^2$. Thus, the spectrum of $10^3$ matrix polynomials are 
determined. Details of the computations are described by Algorithm \ref{algo1}. We recall 
that the ``curves'' surrounding the eigenvalues $\lambda_j$ lie inside the 
$\varepsilon_1$-pseudospectrum of $P$. The figure illustrates that the eigenvalues 
$\lambda_5$ and $\lambda_7$ might coalesce already for a small perturbation of $P$. 

We remark that since the matrices $A_i$, $i=0:2$, that define the matrix polynomial $P$ 
are real, the eigenvalues of $P$ appear in complex conjugate pairs. The pseudospectrum of
matrix polynomials determined by real matrices is known to be symmetric with respect to the 
real axis in the complex plane. The fact that the left plot of Figure \ref{fig1} is not
symmetric with respect to the imaginary axis depends on that it only shows the spectra
of the matrix polynomials $P(\lambda)+\varepsilon_1 \rme^{\rmi\theta_k} W_5(\lambda)$ and
$P(\lambda)+\varepsilon_1 \rme^{\rmi\theta_k} W_7(\lambda)$ associated with the 
eigenvalues $\lambda_5$ and $\lambda_7$ of $P$, but not of the polynomials
$P(\lambda)+\varepsilon_1 \rme^{\rmi\theta_k} W_4(\lambda)$ and
$P(\lambda)+\varepsilon_1 \rme^{\rmi\theta_k} W_6(\lambda)$ associated with the
eigenvalues $\lambda_4$ and $\lambda_6$. A plot of eigenvalues of all these polynomials is
symmetric with respect to the real axis in the complex plane. 

We compare the approximation of the $\varepsilon_1$-pseudospectrum shown in the left plot
of Figure \ref{fig1} with an approximation of the $\varepsilon_1$-pseudospectrum obtained 
by perturbing $P$ by random rank-one matrices. Specifically, the right plot of Figure 
\ref{fig1} shows the spectrum of matrix polynomials of the form 
$P(\lambda)+\varepsilon_1 \rme^{\rmi\theta_k} E(\lambda)$ with $\theta_k=2\pi(k-1)/N$, 
$k=1:N$, where $N=10^6$, and $E(\lambda)=\sum_{h=0}^m \omega_h\lambda^hR_h$. Here $R_h$
is a rank-one random matrix scaled to have unit Frobenius norm. Despite using $10^6$  
perturbations of $P$, which are many more perturbations than used for producing the 
left plot, the right plot of Figure \ref{fig1} does not indicate that any eigenvalue of 
$P$ might coalesce under small perturbations of the matrix polynomial. This important
property clearly is difficult to detect by using random rank-one perturbations.

Next we turn to structured pseudospectra and perturbations. We obtain from (\ref{rout_2})
the estimate $\varepsilon_2=0.0266$ of the structured distance from defectivity
$\varepsilon^{\mc S}_*$. It is achieved for the eigenvalues $\lambda_8$ and $\lambda_9$.
The left plot in Figure \ref{fig2} displays the spectra of matrix polynomials of the form
$P(\lambda)+\varepsilon_2 \rme^{\rmi\theta_k} W^{\mc S}_8(\lambda)$ and 
$P(\lambda)+\varepsilon_2 \rme^{\rmi\theta_k} W^{\mc S}_9(\lambda)$ with 
$\theta_k=2\pi(k-1)/N$, $k=1:N$, for $N=5\cdot10^2$. The computations are described by
Algorithm \ref{algo2}. The plot shows that the eigenvalues $\lambda_8$ and $\lambda_9$ 
might coalesce under small perturbations of $P$.

The right plot of Figure \ref{fig1} shows the spectrum of matrix polynomials of the form 
$P(\lambda)+\varepsilon_2 \rme^{\rmi\theta_k} E^{\mc S}(\lambda)$ with 
$\theta_k=2\pi(k-1)/N$, $k=1:N$, where $N=10^6$, 
$E^{\mc S}(\lambda)=\sum_{h=0}^m \omega_h\lambda^hR^{\mc S}_h$, and 
$R^{\mc S}_h:=R_h|_{\widehat{\mc S}_h}$ is a unit-norm rank-one random matrix projected 
into ${\mc S}_h$. Despite using $10^6$ perturbations, the plot does not indicate that any
eigenvalues of $P$ might coalesce under small structured perturbations.  $\blacksquare$

\begin{table}[htb!]
\centering
\begin{tabular}{cccc}\hline 
$i$ &$\lambda_i$ & $\kappa(\lambda_i)$ & $\kappa^{\mc S}(\lambda_i)$  \\ 
\hline  
$1$ &$-1.6907$ & $23.2593 $ & $\phantom{1}7.0577 $ \\
$2$ &$-0.9225 + 1.1935\rmi$ & $\phantom{2}5.9741$ & $\phantom{1}1.8875$ \\
$3$ &$-0.9225 - 1.1935\rmi$ & $\phantom{2}5.9741$ & $\phantom{1}1.8875$ \\
$4$ &$\phantom{-}0.5245 + 1.3668\rmi$ & $34.2042$ & $11.5406$ \\
$5$ &$\phantom{-}0.5245 - 1.3668\rmi$ & $34.2042$ & $11.5406$ \\
$6$ &$\phantom{-}0.4113 + 0.7192\rmi$ & $17.4605$ & $\phantom{1}8.3749$ \\
$7$ &$\phantom{-}0.4113 - 0.7192\rmi$ & $17.4605$ & $\phantom{1}8.3749$ \\
$8$ &$\phantom{-}0.6637$ & $18.3210$ & $\phantom{1}9.8822$ \\
$9$ &$\phantom{-}0.2045$ & $\phantom{2}7.4414$ & $\phantom{1}7.3777$ \\
$10$ &$-0.5701$ & $\phantom{2}6.2696$ & $\phantom{1}3.7923$ \\
\hline
\end{tabular}
\caption{Example 1: Eigenvalue condition numbers.}
\label{Tab1}
\end{table}

\begin{figure}[ht]
\begin{center}
\includegraphics[width=7cm]{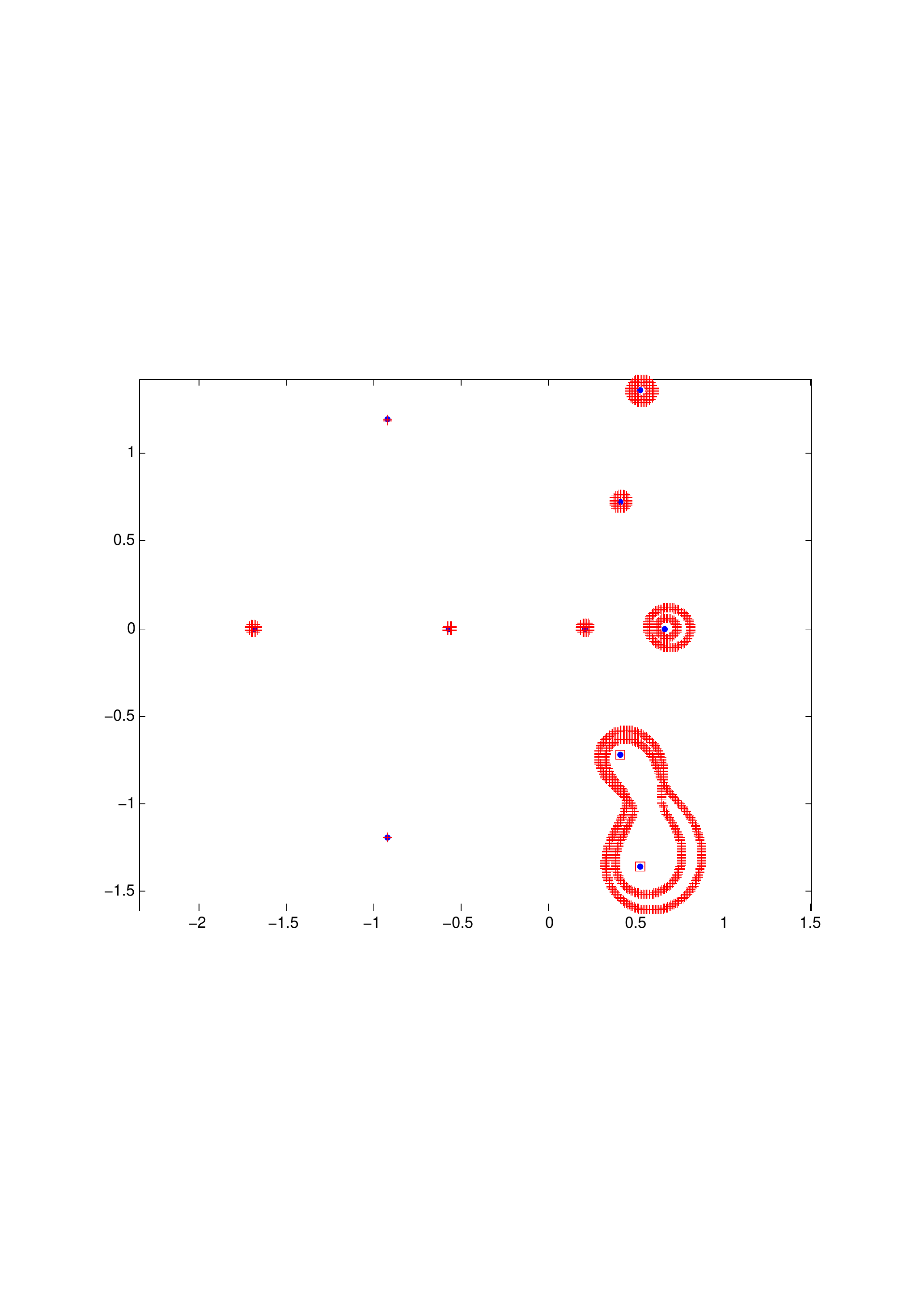}\hskip 0.25cm 
\includegraphics[width=7cm]{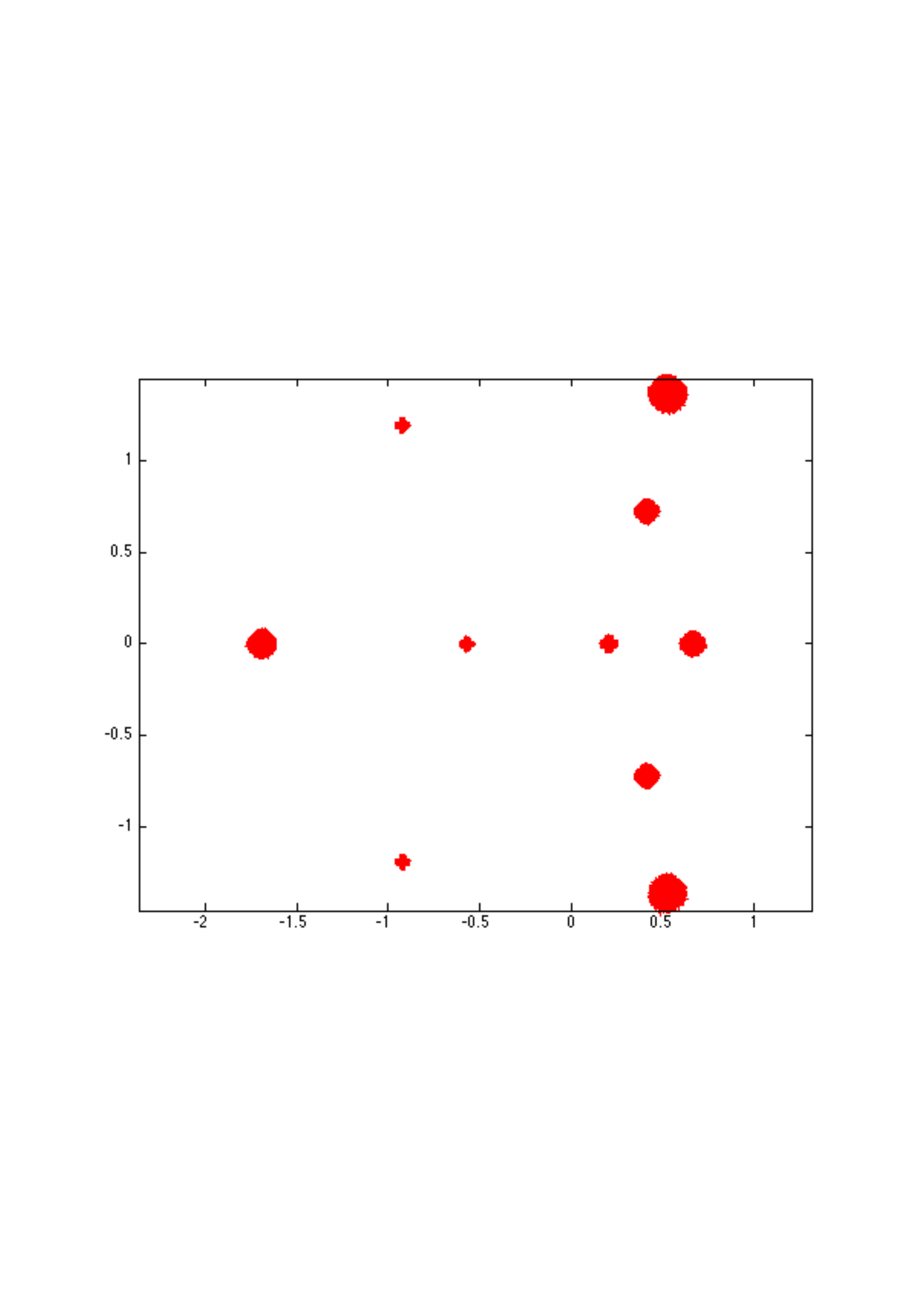}
\end{center}
\caption{Example 1. Left plot: $\Lambda_{\varepsilon_1}(P)$ is approximated by the 
eigenvalues of matrix polynomials of the forms 
$P(\lambda)+\varepsilon_1 \rme^{\rmi\theta_k}W_5(\lambda)$ and 
$P(\lambda)+\varepsilon_1 \rme^{\rmi\theta_k}W_7(\lambda)$, where $\varepsilon_1=0.0127$, 
and the $W_j(\lambda)$ are maximal perturbations associated with the eigenvalues 
$\lambda_j$, $j=5,7$ (marked by red squares), for $\theta_k=2\pi(k-1)/N$, $k=1:N$, with
$N=5\cdot 10^2$. Right plot: $\Lambda_{\varepsilon_1}(P)$ is approximated by the
eigenvalues of matrix polynomials of the form 
$P(\lambda)+\varepsilon_1 \rme^{\rmi\theta_k}E(\lambda)$, where the $E(\lambda)$ are 
random rank-one matrix polynomial perturbations and $k=1:10^6$. \hfill\break}\label{fig1}
\end{figure}

\begin{figure}[ht]
\begin{center}
\includegraphics[width=7cm]{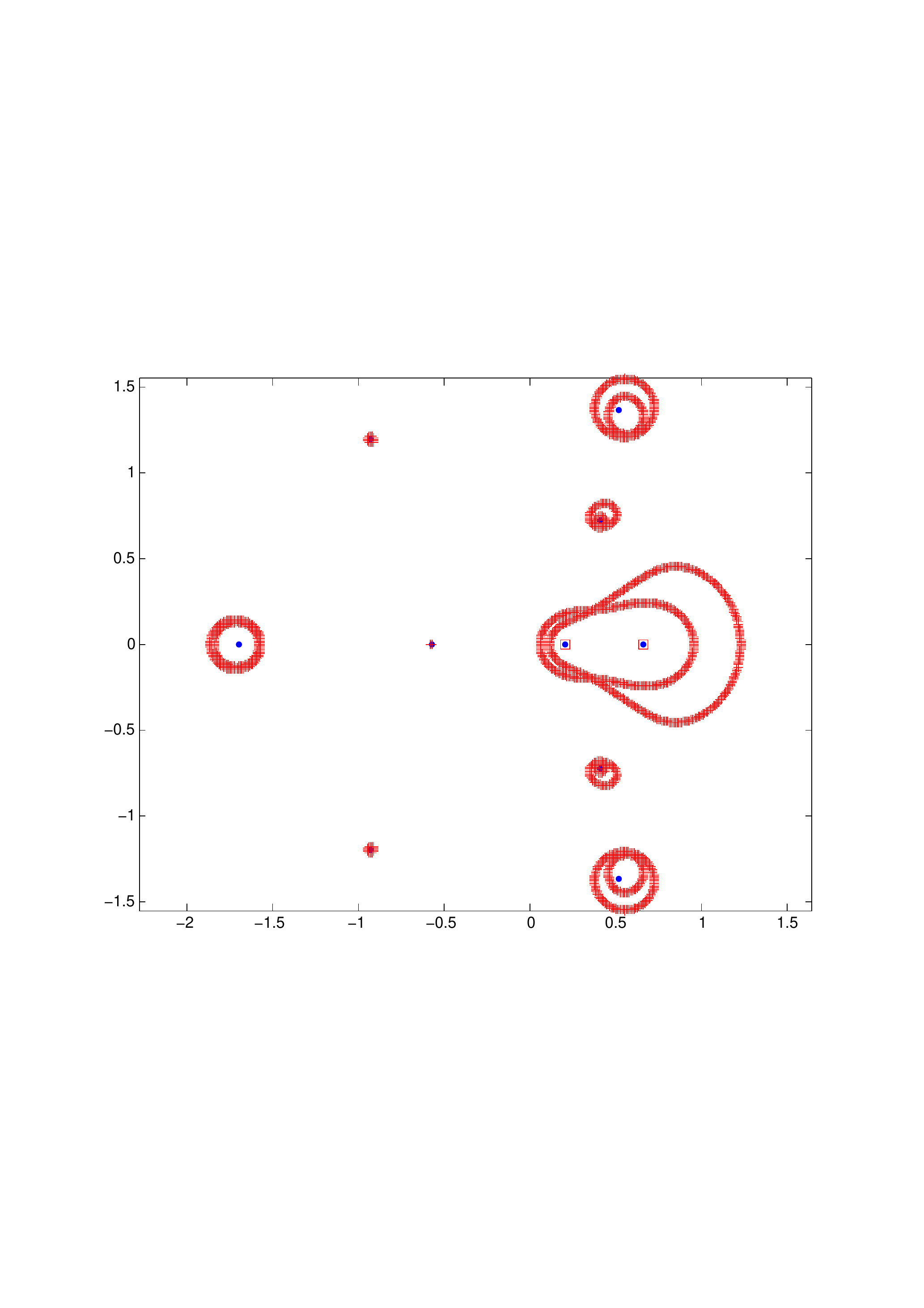}\hskip 0.25cm 
\includegraphics[width=7cm]{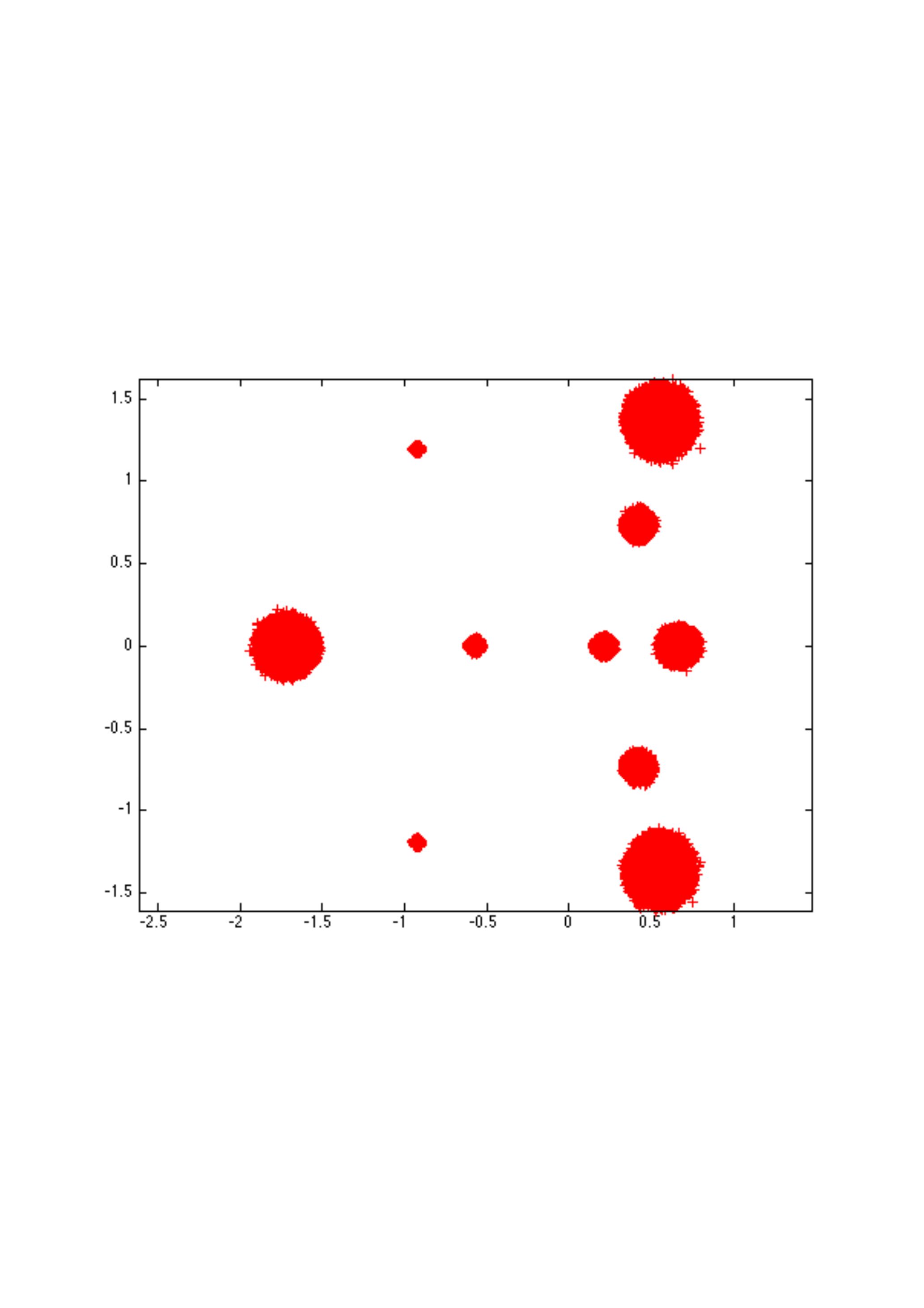}
\end{center}
\caption{Example 1. Left plot: $\Lambda^{\mc S}_{\varepsilon_2}(P)$ is approximated by the
eigenvalues of matrix polynomials of the forms 
$P(\lambda)+\varepsilon_2 \rme^{\rmi\theta_k}W^{\mc S}_8(\lambda)$ and 
$P(\lambda)+\varepsilon_2 \rme^{\rmi\theta_k}W^{\mc S}_9(\lambda)$, where 
$\varepsilon_2=0.0266$, and the $W^{\mc S}_j(\lambda)$ are maximal ${\mc S}$-structured 
perturbations associated with the eigenvalues $\lambda_j$, $j=8,9$ (marked by red a
squares), for $\theta_k=2\pi(k-1)/N$, $k=1:N$, with $N=5\cdot 10^2$. Right plot: 
$\Lambda^{\mc S}_{\varepsilon_2}(P)$ is approximated by the eigenvalues of matrix 
polynomials of the form $P(\lambda)+\varepsilon_2 \rme^{\rmi\theta_k}E^{\mc S}(\lambda)$,
where $E^{\mc S}(\lambda)$ are random ${\mc S}$-structured matrix polynomial 
perturbations, with $k=1:10^6$. \hfill\break}\label{fig2}
\end{figure} 

{\bf Example 2}.
Consider the matrix polynomial $P(\lambda)=A_2\lambda^2+A_1\lambda+A_0$ defined by
$$A_2=\left( \begin{array}{ccc}
17.6 & 1.28 & 2.89 \\
1.28 & 0.824 & 0.413 \\
2.89 & 0.413 & 0.725 \end{array} \right), \qquad\qquad 
A_1=\left( \begin{array}{ccc}
7.66 & 2.45 & 2.1 \\
0.23 & 1.04 & 0.223 \\
0.6 & 0.756 & 0.658 \end{array} \right),$$ 
$$A_0=\left( \begin{array}{ccc}
121 & 18.9 & 15.9 \\
0 & 2.7 & 0.145 \\
11.9 & 3.64 & 15.5 \end{array} \right).$$
This polynomial is discussed in \cite[Section 4.1]{TH}. We choose $\omega=\{1,1,1\}$ 
similarly as in \cite{TH}. The eigenvalues and their condition numbers are shown in Table 
\ref{Tab2}. 

\begin{table}[htb!]
\centering
\begin{tabular}{ccc}\hline 
$i$ &$\lambda_i$ & $\kappa(\lambda_i)$   \\ 
\hline  
$1$ &$-0.8848+8.4415\rmi$ & $27.2147 $  \\
$2$ &$-0.8848 - 8.4415\rmi$ & $27.2147$ \\
$3$ &$\phantom{-}0.0947 + 2.5229\rmi$  & $\phantom{2}0.9276$ \\
$4$ &$\phantom{-}0.0947 - 2.5229\rmi$  & $\phantom{2}0.9276$ \\
$5$ &$-0.9180 +1.7606\rmi$  & $\phantom{2}2.3301$ \\
$6$ &$-0.9180 -1.7606\rmi$ & $\phantom{2}2.3301$ \\
\hline
\end{tabular}
\caption{Example 2: Eigenvalue condition numbers.}
\label{Tab2}
\end{table}

Figure \ref{fig5} displays an approximation of the $\varepsilon$-pseudospectrum of 
$P(\lambda)$ obtained by letting $\varepsilon=10^{-0.8}$ (like in \cite{TH}) and 
computing the eigenvalues of matrix polynomials of the form 
$P(\lambda)+\varepsilon \rme^{\rmi\theta_k} W_1(\lambda)$ for $\theta_k=2\pi(k-1)/10^2$, 
$k=1:10^2$, where $W_1(\lambda)$ is a  maximal perturbation associated with the eigenvalue
$\lambda_1$ (marked by red square) with the largest condition number. Details of the 
computations are described by Algorithm \ref{algo1}. $\blacksquare$

\begin{figure}[ht]
\begin{center}
\includegraphics[width=12cm]{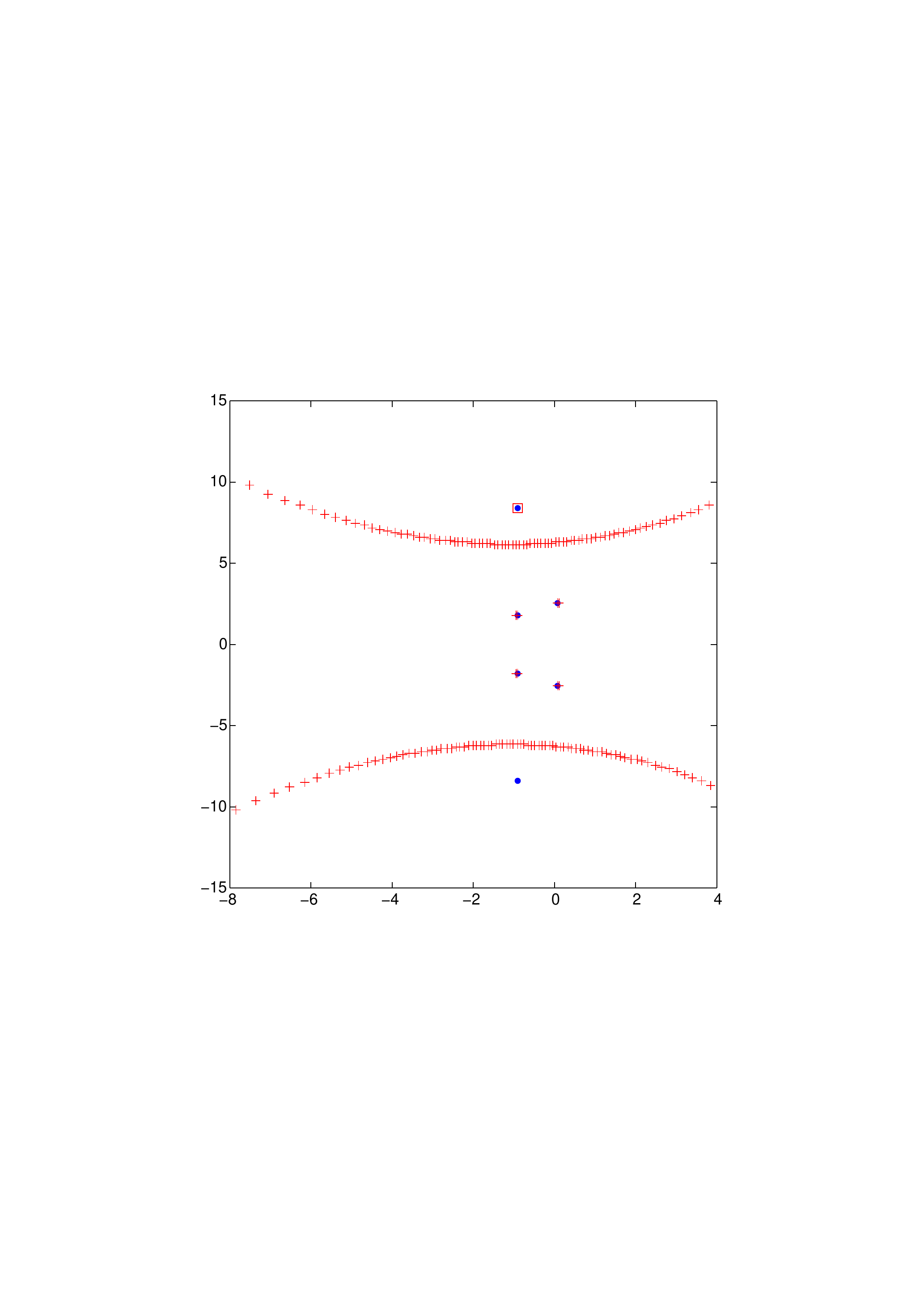}
\end{center}
\caption{Example 2. The pseudospectrum $\Lambda_{\varepsilon}(P)$ for 
$\varepsilon=10^{-0.8}$ is approximated by the eigenvalues of matrix polynomials 
$P(\lambda)+\varepsilon \rme^{\rmi\theta_k} W_1(\lambda)$, where the $W_1(\lambda)$ are
maximal perturbations associated with the eigenvalue $\lambda_1$ (marked by red square),
for $\theta_k:=2\pi(k-1)/10^2$, $k=1:10^2$. \hfill\break}\label{fig5}
\end{figure}

{\bf Example 3}.
We consider the matrix polynomial $P(\lambda)=M\lambda^2+C\lambda+K$ with the structure 
$\cal{S}$ defined in Remark \ref{masspr}. This polynomial is considered in 
\cite[Section 4.2]{TH}. We choose the weights $\omega=\{\|K\|_F, \|C\|_F, \|M\|_F\}$ and
obtain from (\ref{rout_2}) the estimate $\varepsilon_2=3.5709\cdot10^{-7}$ of the 
structured distance from defectivity $\varepsilon^{\mc S}_*$. It is achieved for the 
eigenvalues $\lambda_{493}$ and $\lambda_{494}$. These eigenvalues are the most
$\Lambda_{\varepsilon_2}^{\mc S}$-sensitive pair, but they are not the most ill-conditioned
eigenvalues, despite that their relative distance is only $10^{-6}$.

Figure \ref{fig6} displays the spectra of matrix polynomials of the form
$P(\lambda)+\varepsilon_2 \rme^{\rmi\theta_k}W^{\mc S}_{493}(\lambda)$ and 
$P(\lambda)+\varepsilon_2 \rme^{\rmi\theta_k}W^{\mc S}_{494}(\lambda)$ with 
$\theta_k=2\pi(k-1)/10^2$, $k=1:10^2$. The computations are described by Algorithm 
\ref{algo2}. $\blacksquare$

\begin{figure}[ht]
\begin{center}
\includegraphics[width=12cm]{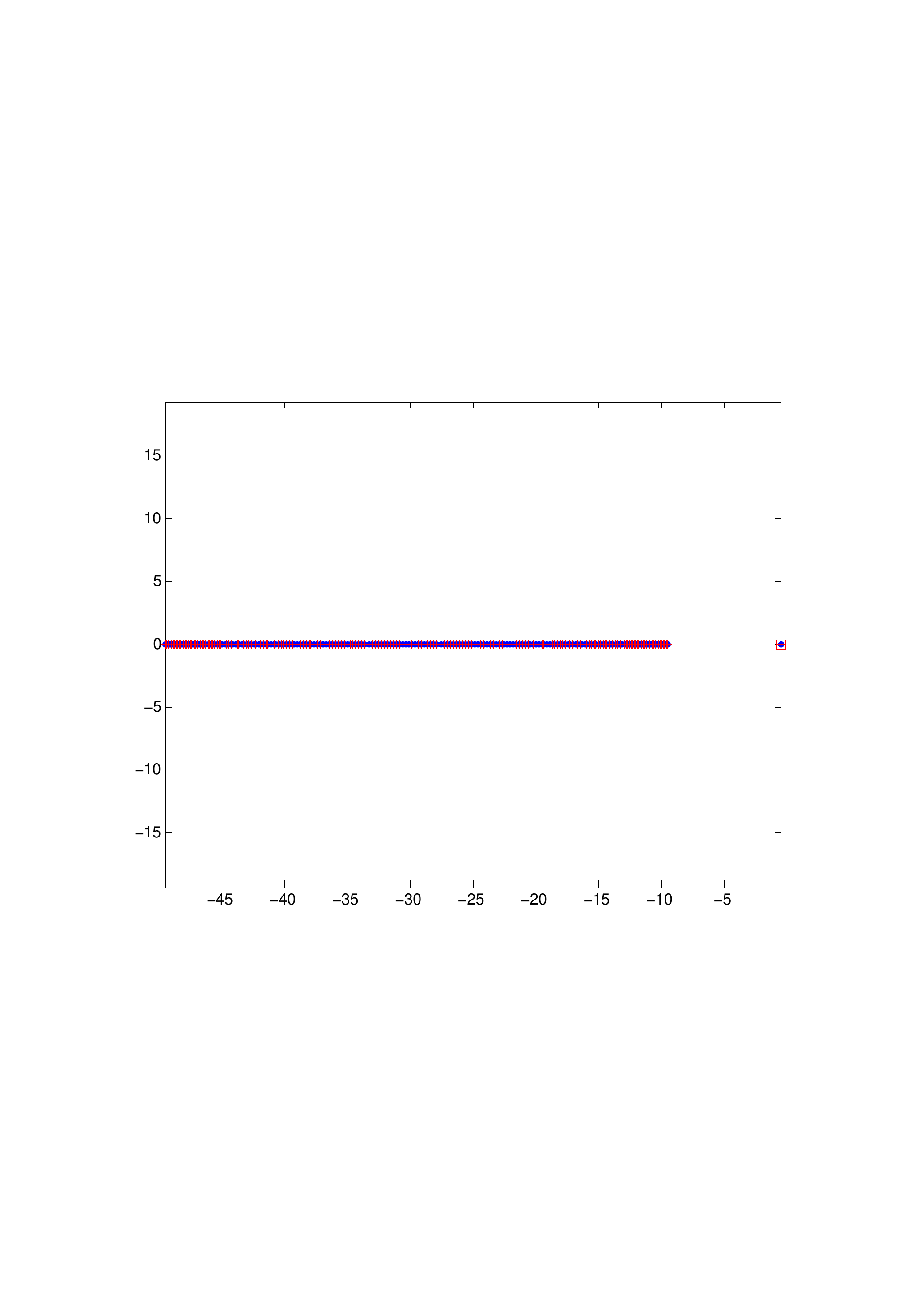}
\end{center}
\caption{Example 3. The structured pseudospectrum $\Lambda^{\mc S}_{\varepsilon_2}(P)$ 
for $\varepsilon_2=3.5709\cdot10^{-7}$ is approximated by the eigenvalues of matrix 
polynomials of the form 
$P(\lambda)+\varepsilon_2 \rme^{\rmi\theta_k}W^{\mc S}_{493}(\lambda)$ and 
$P(\lambda)+\varepsilon_2 \rme^{\rmi\theta_k}W^{\mc S}_{494}(\lambda)$, where the matrices 
$W^{\mc S}_j(\lambda)$ are maximal ${\mc S}$-structured perturbations associated with the 
eigenvalues $\lambda_j$, $j=493,494$ (marked by red squares, though these eigenvalues 
cannot be distinguished in the figure), for $\theta_k=2\pi(k-1)/10^2$, $k=1:10^2$. 
\hfill\break}\label{fig6}
\end{figure}

\section{Conclusions}\label{sec8}
This paper describes a novel and fairly inexpensive approach to determine the sensitivity 
of eigenvalues of a matrix polynomial. Eigenvalues of perturbed matrix polynomials are 
computed, where the perturbations are chosen to shed light on whether eigenvalues of the
given matrix polynomial may coalesce under small perturbations.


\begin{thebibliography}{99}


\bibitem{AAB}
Sk.~S.~Ahmad, R.~Alam, and R.~Byers, On pseudospectra, critical points, and multiple
eigenvalues of matrix pencils, SIAM J. Matrix Anal. Appl., 31 (2010), pp.~1915--1933.

\bibitem{ACL}
A.~L.~Andrew, K.~W.~E.~Chu, and P.~Lancaster, Derivatives of eigenvalues and eigenvectors 
of matrix functions, SIAM J. Matrix Anal.  Appl., 14 (1993), pp.~903--926.

\bibitem{BG}
C. Bekas and E. Gallopoulos, Parallel computation of pseudospectra by fast descent, 
Parallel Computing, 28 (2002), pp.~223--242.



\bibitem{BLP}
L.~Boulton, P.~Lancaster, and P.~Psarrakos, On pseudospectra of matrix polynomials and 
their boundaries, Math. Comp., 77 (2008), pp. 313--334.


\bibitem{BN}  
P.~Butt\`a and S.~Noschese, Structured maximal perturbations of Hamiltonian eigenvalue 
problems, J. Comput. Appl. Math.,  272 (2014), pp.~304--312.

\bibitem{BGN} 
P.~Butt\`a, N.~Guglielmi, and S.~Noschese, Computing the structured pseudospectrum of a
Toeplitz matrix and its extremal points, SIAM J. Matrix Anal. Appl., 33 (2012), 
pp.~1300--1319.

\bibitem{BGMN}
P.~Butt\`a, N.~Guglielmi, M.~Manetta, and S.~Noschese, Differential equations for 
real-structured defectivity measures, SIAM J. Matrix Anal. Appl.,  36 (2015), 
pp.~523--548. 



\bibitem{GLR}
I.~Gohberg, P.~Lancaster, and L.~Rodman, Matrix Polynomials, SIAM, Philadelphia, 2009.

\bibitem{GL}
C.-H. Guo and W.-W. Lin, Solving a structured quadratic eigenvalue problem by a 
structure-preserving doubling algorithm, SIAM J. Matrix Anal. Appl., 31 (2010),
pp. 2784--2801.

\bibitem {G06}
S.~Graillat, A note on structured pseudospectra, J.  Comput. Appl. Math., 191 (2006),
pp.~68--76.

\bibitem{HH}
D.~J.~Higham and N.~J.~Higham, Backward error and condition of structured linear systems,
SIAM J. Matrix Anal. Appl., 13 (1992), pp.~162--175.

\bibitem{HT}
N.~J.~Higham and F.~Tisseur, More on pseudospectra for polynomial eigenvalue problems and 
applications in control theory, Linear Algebra Appl., 351 (2002), pp.~435--453.

\bibitem{K10}
M.~Karow, Structured pseudospectra and the condition of a nonderogatory eigenvalue,
SIAM J. Matrix Anal. Appl., 31 (2010), pp.~2860--2881.



\bibitem{MMMV}
D. S. Mackey, N. Mackey, C. Mehl, and V. Mehrmann, Structured polynomial eigenvalue
problems: Good vibrations from good linearizations, SIAM J. Matrix Anal. Appl., 28 (2006),
pp. 1029--1051.


\bibitem{MP}
D. Mezher and B. Philippe, Parallel computation of pseudospectra of large sparse matrices,
Parallel Computing, 28 (2002), pp.~199--221.

\bibitem{NP06}
S.~Noschese and L.~Pasquini, Eigenvalue condition numbers: zero-structured versus 
traditional, J. Comput. Appl. Math., 185 (2006), pp.~174--189.

\bibitem{NP07} S.~Noschese and L.~Pasquini, Eigenvalue patterned condition numbers: 
Toeplitz and Hankel cases, J. Comput. Appl. Math., 206 (2007), pp.~615--624. 


\bibitem{NR}
S.~Noschese and L.~Reichel, Approximated structured pseudospectra, Numer. Linear Algebra 
Appl., 24 (2017) no. 2, e2082.

\bibitem{PRRWY}
M.~Prani\'c, L.~Reichel, G.~Rodriguez, Z.~Wang, and X.~Yu, A rational Arnoldi process with
applications, Numer. Linear Algebra Appl., 23 (2016), pp. 1007--1022.



\bibitem{R06}
S.~M.~Rump, Eigenvalues, pseudospectrum and structured perturbations, Linear Algebra 
Appl., 413 (2006), pp.~567--593.

\bibitem{Ti} 
F.~Tisseur, Backward error and condition of polynomial eigenvalue problems, Linear Algebra
Appl., 309 (2000), pp.~339--361.

\bibitem{TH}
F.~Tisseur and N.~J.~Higham, Structured pseudospectra for polynomial eigenvalue problems
with applications, SIAM J. Matrix Anal. Appl., 23 (2001), pp.~187--208.

\bibitem{TE}
L.~N.~Trefethen and M.~Embree, Spectra and Pseudospectra, Princeton University Press,
Princeton, 2005.

\bibitem{W65} 
J.~H.~ Wilkinson, The Algebraic Eigenvalue Problem, Oxford University Press, 1965.

\bibitem{Wr02}
T.~G. Wright, Eigtool: a graphical tool for nonsymmetric eigenproblems. Oxford University 
Computing Laboratory, {\em http://www.comlab.ox.ac.uk/\-pseudospectra/\-eigtool/}, 2002.

\bibitem{WT}
T.~G.~Wright and L.~N.~Trefethen, Large-scale computation of pseudospectra using ARPACK 
and eigs, SIAM J. Sci. Comput., 23 (2001), pp.~591--605.

\end{thebibliography}
\end{document}